\theoremstyle{definition}\newtheorem{definition}{Definition}[section]
\theoremstyle{plain}\newtheorem{theorem}[definition]{Theorem}
\theoremstyle{plain}\newtheorem{proposition}[definition]{Proposition}
\theoremstyle{plain}
\theoremstyle{plain}\newtheorem{lemma}[definition]{Lemma}
\theoremstyle{definition}\newtheorem{assumption}[definition]{Assumption}
\theoremstyle{definition}\newtheorem{example}[definition]{Example}
\theoremstyle{definition}\newtheorem{remark}[definition]{Remark}
\numberwithin{equation}{section}
\newcommand{\one}{\mathds{1}}
\newcommand{\M}{\mathcal{M}}
\newcommand{\N}{\mathbb{N}}
\newcommand{\cO}{\mathcal{O}}
\newcommand{\1}{{\ell^1(\N)}}
\newcommand{\2}{{\ell^2(\N)}}
\newcommand{\3}{{\ell^\infty(\N)}}
\newcommand{\xad}{x_\alpha^\delta}
\newcommand{\xdag}{x^\dagger}
\newcommand{\la}{\langle}
\newcommand{\ra}{\rangle}
\newcommand{\CC}{{\mathbb C}}
\newcommand{\NN}{{\mathbb N}}
\newcommand{\TT}{\mathbb{T}}
\newcommand{\ZZ}{\mathbb{Z}}
\newcommand{\cW}{\mathcal{W}}
\DeclareMathOperator*{\sgn}{sgn}
\DeclareMathOperator*{\supp}{supp}
\begin{document}

\date{\today}

\title{A unified approach to convergence rates for $\ell^1$-regularization and lacking sparsity}

\author{
{\sc Jens Flemming}\,,
{\sc Bernd Hofmann}\,,
{\sc Ivan Veseli\'c}\,
\thanks{Faculty of Mathematics, Technische Universit\"at Chemnitz, 09107 Chemnitz, Germany.}}

\maketitle

\begin{abstract}
In $\ell^1$-regularization, which is an important tool in signal and image processing, one usually is concerned with signals
and images having a sparse representation in some suitable basis, e.g.~in a wavelet basis.
Many results on convergence and convergence rates of sparse approximate solutions to linear ill-posed problems are known, but rate results
for the $\ell^1$-regularization in case of lacking sparsity had not been published until 2013.
In the last two years, however, two articles appeared providing sufficient conditions for convergence rates in case of non-sparse
but almost sparse solutions. In the present paper, we suggest a third sufficient condition, which unifies the existing two and,
by the way, also incorporates the well-known restricted isometry property.
\end{abstract}

\noindent\textbf{MSC2010 subject classification:} 65J20, 47A52, 49N45

\medskip

\noindent\textbf{Keywords:}
Linear ill-posed problems, Tikhonov-type regularization, sparsity constraints, convergence rates, variational inequalities, restricted isometry property.

\section{Introduction}

The method of $\ell^1$-regularization as a variety of Tikhonov-type regularization became of significant interest in the past ten years, mainly driven by its applications
in signal processing and imaging. We consider a bounded linear operator $A:\1\rightarrow Y$ mapping from $\1$ into a Banach space $Y$. The idea is to solve with some exponent $1 \le p < \infty$ the minimization problem
\begin{equation}\label{eq:l1}
\|Ax-y^\delta\|_Y^p+\alpha\|x\|_\1\to\min, \qquad \mbox{subject to} \quad x=(x_k)_{k=1}^\infty\in\1,
\end{equation}
with minimizers $\xad$ as an auxiliary problem for finding a solution $\xdag=(\xdag_k)_{k=1}^\infty$ of the linear operator equation
\begin{equation} \label{eq:opeq}
Ax=y,\quad x\in \1,\;y \in Y,
\end{equation}
with  exact right-hand side $y$ from the range $\mathcal{R}(A)$ of the operator $A$ and for available data $y^\delta \in Y$ which are characterized by the deterministic noise model
\begin{equation}
\|y-y^\delta\|_Y\leq\delta
\end{equation}
with noise level $\delta\geq 0$. The regularization parameter $\alpha>0$ in \eqref{eq:l1} controls the influence of the penalty term $\|x\|_\1$, hence the trade-off between
data fidelity and approximation. For details on the theoretical background of $\ell^1$-regularization we refer to \cite{BeBu11,BreLor09,Grasm10,Grasmei11,Lorenz08,RamRes10}.

The problems (\ref{eq:opeq}) and (\ref{eq:l1}) are motivated by the assumption that for a separable infinite dimensional Hilbert space $X$ with inner product
$\langle \cdot,\cdot\rangle_X$, some orthonormal basis $\{u^{(k)}\}_{k \in \N}$ and for an injective and bounded
linear operator $\widetilde A: X \to Y$ the solutions of the operator equation
\begin{equation} \label{eq:opeqtilde}
\widetilde A\, \widetilde x =y,\quad \widetilde x\in X,\;y \in Y,
\end{equation}
are sparse in the sense that only a finite number of basis elements $u^{(k)}$ becomes active in the solution. This means that the associated sequence $x=(x_k)_{k=1}^\infty:=(\langle \widetilde x,u^{(k)}\rangle_X)_{k=1}^\infty$ of Fourier coefficients for the solution to (\ref{eq:opeqtilde}) is in $\ell^0(\N)$. Taking into account the restriction to $\1$ of the synthesis operator $$L: x \in \1  \mapsto \sum_{k=1}^\infty x_k u^{(k)} \in X$$ and setting $A:=\widetilde A \circ L$
it makes sense to solve (\ref{eq:l1}) due to $\xad \in \ell^0(\N)$, i.e., because the minimizers of \eqref{eq:l1} (which always exist - not necessarily unique - and depend stably on the data $y^\delta$) are sparse
(cf.~\cite[Proposition~2.8]{BurFleHof13}).
Since the synthesis operator $L: \1 \to X$ is injective and bounded, the linear operator $A: \1 \to Y$ is also injective and bounded if $\widetilde A: X \to Y$ is.
In \cite[Proposition~4.6]{FHV15} we have shown that the operator equation \eqref{eq:l1} with $A=\widetilde A \circ L$ as introduced above is always ill-posed, i.e.~$\mathcal{R}(A) \not= \overline{\mathcal{R}(A)}$, and even ill-posed of type~II in the sense of Nashed (cf.~\cite{Nashed86}). This is an additional motivation for the use of
a regularization method like \eqref{eq:l1} for the stable approximate solution of equation (\ref{eq:opeq}).

In \cite{BurFleHof13} and \cite{FleHeg14} for linear problems \eqref{eq:opeq}, as well as in \cite{BoHo13} for nonlinear problems, it was shown that the $\ell^1$-regularization based on \eqref{eq:l1} is also applicable if the exact solution $\xdag \in \1$ to (\ref{eq:opeq}), which is unique for injective $A$, is not sparse but \emph{almost sparse}. This means that components decay fast enough. In fact, the assumption $x^\dagger\in\1$ suffices
to obtain convergence rates for the convergence of the minimizers $x_\alpha^\delta$ to the exact solution $x^\dagger$ if the
unit sequences
$$e^{(k)}:=(0,\ldots,0,1,0,\ldots)\quad\text{(with $1$ at position $k$)}$$
and the operator $A$ fit together well.
The faster the components $|\xdag_k|$ of $\xdag$ decay to zero as $k \to \infty$, the faster is the convergence.

The aim of the present paper is to unify the assumptions used in \cite{BurFleHof13} and \cite{FleHeg14} to prove convergence rates.
As a byproduct also the so called \emph{restricted isometry property} appearing frequently in connection with the reconstruction
of sparse signals (see, e.g., \cite{CanRomTao06}) will be covered by our unified setting.

The structure of the paper is as follows: In Section~\ref{sc:suff} we present the new setting and prove convergence rates on its basis.
Sections~\ref{sc:smooth} and \ref{sc:nonsmooth} relate the approaches from \cite{BurFleHof13} and \cite{FleHeg14}, respectively,
to the new setting and Section~\ref{sc:rip} incorporates the restricted isometry property.
Moreover, Section~\ref{sc:examples} provides two examples and Section~\ref{sc:outlook} completes the paper with some outlook.

\section{Sufficient condition for convergence rates}\label{sc:suff}

To prove convergence rates for $\ell^1$-regularization we use the powerful technique of variational inequalites (also known as
variational source conditions or variational smoothness assumptions), that is, we show that there are a constant $\beta\in(0,1]$ and
a concave index function $\varphi:[0,\infty)\rightarrow[0,\infty)$ such that
\begin{equation}\label{eq:vi}
\beta\|x-x^\dagger\|_\1\leq\|x\|_\1-\|x^\dagger\|_\1+\varphi(\|Ax-Ax^\dagger\|_Y)\quad\text{for all $x\in\1$}.
\end{equation}
Here, a function $\varphi$ is called index function if it is continuous and strictly increasing with $\varphi(0)=0$. Note that a variational inequality (\ref{eq:vi})
is closely connected with the injectivity of $A$ (cf.~\cite[Proposition~5.6]{BurFleHof13}). Therefore we restrict our considerations in this paper to the case of injective linear operators $A$.
Based on (\ref{eq:vi}) convergence rates
$$\|x_\alpha^\delta-x^\dagger\|_\1=\cO(\varphi(\delta))\quad\text{if $\delta\to 0$},$$
can be shown without any further assumptions, where $\alpha=\alpha(\delta,y^\delta)$ has to be chosen in one of several possible ways.
Details on the technique of variational inequalities for obtaining convergence rates in Tikhonov-type regularization can be found in \cite{Andreev14,AnzHofMat14,BoHo10,Flemmingbuch12,Grasm10,HKPS07,HofMat12,WeHo12}.

Before we come to the convergence rate result, we introduce some notation:
By $\sgn x$ for $x\in\1$ we denote the $\ell^\infty$-sequence defined by
$$[\sgn x]_k:=\begin{cases}1,&\text{if $x_k>0$},\\0,&\text{if $x_k=0$},\\-1,&\text{if $x_k<0$}.\end{cases}$$
For index sets $M\subseteq\N$ we define
$$\one_M:=\{\xi\in\3:\supp\xi\subseteq M\text{ and }\xi_k\in\{-1,0,1\}\text{ for all }k\in\N\}$$
and the projections $P_M:\3\rightarrow\3$ and $P_M:\1\rightarrow\1$ by
$$[P_M\xi]_k:=\begin{cases}\xi_k,&\text{if $k\in M$,}\\0,&\text{else.}\end{cases}$$

The following set of assumptions unifies the conditions used in \cite{BurFleHof13} and \cite{FleHeg14} for obtaining
convergence rates.

\begin{assumption}\label{as:source}
For each $n\in\N$ let $\M_n$ be a family of index sets $M\subseteq\N$ with
cardinality $\vert M\vert=n$ and assume that $\{1,\ldots,n\}\in\M_n$.
Then for each $n\in\N$ and each
\begin{equation}\label{eq:cupm}
\xi\in\bigcup_{M\in\M_n}\one_M
\end{equation}
we assume that there is some $\eta=\eta(n,\xi)\in Y^\ast$ with the following properties:
\begin{itemize}
\item[(i)]
$\|\eta\|_{Y^\ast}\leq\gamma_n$ for some constant $\gamma_n>0$ independent of $\xi$,
\item[(ii)]
$P_{\supp\xi}A^\ast\eta=\xi$,
\item[(iii)]
$\|(I-P_{\supp\xi})A^\ast\eta\|_\3\leq c$ for some $c\in[0,1)$ independent of $\xi$ and $n$.
\end{itemize}
\end{assumption}

Typical choices of the families $\M_n$ are the extreme cases
$$\M_n=\{\{1,\ldots,n\}\}\quad\text{and}\quad\M_n=\{M\subseteq\N:\vert M\vert=n\}.$$
In the first case we have
$$\bigcup_{M\in\M_n}\one_M=\{\xi\in\3:\supp\xi\subseteq \{1,\ldots,n\}\text{ and }\xi\in\one_\N\}$$
in \eqref{eq:cupm} and in the second case
$$\bigcup_{M\in\M_n}\one_M=\{\xi\in\3:\vert\supp\xi\vert\leq n\text{ and }\xi\in\one_\N\}.$$

\begin{theorem}\label{th:vi}
Let Assumption~\ref{as:source} be satisfied for some sequence $(\M_n)_{n\in\N}$ of families of index sets
and assume that the corresponding sequence $(\gamma_n)_{n\in\N}$ in (i) of Assumption~\ref{as:source} contains an element $\gamma_k$
such that $\gamma_k<\inf_{l>k}\gamma_l$.
Then a variational inequality~\eqref{eq:vi} holds with
\begin{equation}\label{eq:phi}
\varphi(t)=2\inf_{n\in\N}\left(\inf_{M\in\M_n}\|(I-P_M)x^\dagger\|_\1+\frac{1}{1+c}\,\gamma_n\,t\right),
\end{equation}
where $\varphi$ is a concave index function and $c$ is taken from (iii) in Assumption~\ref{as:source}.
The constant $\beta$ in \eqref{eq:vi} attains the form $\beta=\frac{1-c}{1+c}$.
\end{theorem}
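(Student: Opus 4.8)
The plan is to first establish, for every $x\in\1$, every $n\in\N$ and every $M\in\M_n$, the single affine‑in‑$t$ inequality
\begin{equation}\label{eq:vi-goal}
\frac{1-c}{1+c}\,\|x-\xdag\|_\1\le\|x\|_\1-\|\xdag\|_\1+2\,\|(I-P_M)\xdag\|_\1+\frac{2}{1+c}\,\gamma_n\,\|Ax-A\xdag\|_Y ,
\end{equation}
and then, keeping $x$ fixed, to pass to the infimum of the right–hand side over $M\in\M_n$ and afterwards over $n\in\N$. Since neither $\|x-\xdag\|_\1$ nor $\|x\|_\1-\|\xdag\|_\1$ depends on $M$ or $n$, and since $1+\tfrac{1-c}{1+c}=\tfrac2{1+c}$, that infimum equals $\|x\|_\1-\|\xdag\|_\1+\varphi(\|Ax-A\xdag\|_Y)$ with $\varphi$ as in \eqref{eq:phi}; hence \eqref{eq:vi} holds with $\beta=\tfrac{1-c}{1+c}$, and $\beta\in(0,1]$ because $c\in[0,1)$.

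For \eqref{eq:vi-goal} I would fix $x,n,M$, put $S:=\{k\in M:\xdag_k\neq0\}$ and $N:=\{k\in S:x_k\neq\xdag_k\}$ (so $N\subseteq S\subseteq M$), and invoke Assumption~\ref{as:source} \emph{twice at the same level $n$}, for two different sign patterns, both supported in $M$. First, for $\xi:=\sgn(P_S\xdag)\in\one_M$ (which has $\supp\xi=S$) it yields $\eta\in Y^\ast$ with $\|\eta\|_{Y^\ast}\le\gamma_n$, $P_SA^\ast\eta=\xi$ and $\|(I-P_S)A^\ast\eta\|_\3\le c$. Abbreviating $\zeta:=A^\ast\eta$, I split $\|x\|_\1-\|\xdag\|_\1=\sum_{k\in S}(|x_k|-|\xdag_k|)+\sum_{k\notin S}(|x_k|-|\xdag_k|)$ and estimate the first sum below by $\sum_{k\in S}\zeta_k(x_k-\xdag_k)$ (legitimate since $\zeta_k=\sgn(\xdag_k)$ on $S$) and each term of the second by $(1-c)|x_k|+\zeta_k(x_k-\xdag_k)-(1+c)|\xdag_k|$ (legitimate since $|\zeta_k|\le c$ off $S$). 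Summing, using $\sum_k\zeta_k(x_k-\xdag_k)=\la\eta,Ax-A\xdag\ra\ge-\gamma_n\|Ax-A\xdag\|_Y$, the reverse triangle inequality for $\|(I-P_S)x\|_\1$, and $\|(I-P_S)\xdag\|_\1=\|(I-P_M)\xdag\|_\1$ (because $\xdag$ vanishes on $M\setminus S$), I expect to obtain
\begin{equation}\label{eq:vi-e1}
\|x\|_\1-\|\xdag\|_\1\ \ge\ (1-c)\,\|(I-P_S)(x-\xdag)\|_\1-2\,\|(I-P_M)\xdag\|_\1-\gamma_n\|Ax-A\xdag\|_Y .
\end{equation}
Second, for $\xi':=\sgn(P_N(x-\xdag))\in\one_M$ (which has $\supp\xi'=N$) it yields $\eta'$ with $\|\eta'\|_{Y^\ast}\le\gamma_n$, $P_NA^\ast\eta'=\xi'$ and $\|(I-P_N)A^\ast\eta'\|_\3\le c$. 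Since now $[A^\ast\eta']_k(x_k-\xdag_k)=|x_k-\xdag_k|$ for $k\in N$, expanding $\la\eta',Ax-A\xdag\ra=\sum_k[A^\ast\eta']_k(x_k-\xdag_k)$ and bounding the remaining terms by $c$ in modulus produces the null‑space‑type estimate
\begin{equation}\label{eq:vi-e2}
\|P_S(x-\xdag)\|_\1\ \le\ \gamma_n\|Ax-A\xdag\|_Y+c\,\|(I-P_S)(x-\xdag)\|_\1 ,
\end{equation}
where one uses that $x-\xdag$ vanishes on $S\setminus N$, so that $\|P_N(x-\xdag)\|_\1=\|P_S(x-\xdag)\|_\1$ and $\|(I-P_N)(x-\xdag)\|_\1=\|(I-P_S)(x-\xdag)\|_\1$.

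Combining the two estimates closes the argument: from \eqref{eq:vi-e2} and $\|x-\xdag\|_\1=\|P_S(x-\xdag)\|_\1+\|(I-P_S)(x-\xdag)\|_\1$ we get $\|x-\xdag\|_\1\le\gamma_n\|Ax-A\xdag\|_Y+(1+c)\,\|(I-P_S)(x-\xdag)\|_\1$; inserting the bound for $\|(I-P_S)(x-\xdag)\|_\1$ obtained from \eqref{eq:vi-e1} by dividing through by $1-c>0$, and then multiplying by $\tfrac{1-c}{1+c}$, yields precisely \eqref{eq:vi-goal}. Finally I would check that $\varphi$ from \eqref{eq:phi} is a concave index function. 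As a pointwise infimum of the nonnegative affine maps $t\mapsto2\big(\inf_{M\in\M_n}\|(I-P_M)\xdag\|_\1+\tfrac1{1+c}\gamma_nt\big)$, $\varphi$ is finite‑valued and concave, hence continuous on $(0,\infty)$; moreover $\varphi(0)=0$ since $\inf_{M\in\M_n}\|(I-P_M)\xdag\|_\1\le\|(I-P_{\{1,\dots,n\}})\xdag\|_\1=\sum_{k>n}|\xdag_k|\to0$ as $n\to\infty$ (here $\xdag\in\1$ and $\{1,\dots,n\}\in\M_n$ enter), and this also gives continuity at $0$. For strict monotonicity I use the extra hypothesis on $(\gamma_n)$: if $\gamma_k<\inf_{l>k}\gamma_l$, then $\inf_{n\in\N}\gamma_n=\min\{\gamma_1,\dots,\gamma_k,\inf_{l>k}\gamma_l\}>0$, so $\varphi(t)\ge\tfrac2{1+c}\big(\inf_n\gamma_n\big)\,t$ for all $t\ge0$; a nondecreasing concave function on $[0,\infty)$ dominating a positive multiple of $t$ cannot be constant on any interval (concavity would otherwise force it to be eventually constant, hence bounded), so it is strictly increasing.

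The step I expect to be most delicate is \eqref{eq:vi-e2}. Using Assumption~\ref{as:source} only once, with the obvious pattern $\xi=\sgn(P_S\xdag)$, controls merely the off‑support part $\|(I-P_S)(x-\xdag)\|_\1$ of the error; on the coordinates $S$ where $\xdag$ is large, the $\ell^1$–norm is insensitive to small perturbations, so one genuinely has to feed the \emph{data‑dependent} pattern $\xi'=\sgn(P_N(x-\xdag))$ back into the assumption to get a null‑space‑type bound on $\|P_S(x-\xdag)\|_\1$ — this double use of the assumption is the crux. Beyond that, the main care needed is the bookkeeping of the nested index sets $N\subseteq S\subseteq M$: one repeatedly uses that $\xdag$ vanishes on $M\setminus S$ and $x-\xdag$ on $S\setminus N$ to identify the various truncated norms. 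The degenerate case $P_M\xdag=0$ (where $S=N=\emptyset$ and $\xi=\xi'=0$) needs no separate treatment, as all displayed estimates remain valid verbatim.
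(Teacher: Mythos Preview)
Your argument is correct and lands on the same inequality with the same constants, but it follows a genuinely different route from the paper. The paper invokes Assumption~\ref{as:source} \emph{once}, with the single sign pattern $\xi=P_M\sgn(x-\xdag)$; from $\xi=P_{\supp\xi}A^\ast\eta$ it obtains directly
\[
\|P_M(x-\xdag)\|_\1\le c\,\|(I-P_M)(x-\xdag)\|_\1+\gamma_n\|Ax-A\xdag\|_Y,
\]
and then reaches your \eqref{eq:vi-goal} by pure $\ell^1$-algebra (splitting $\|x\|_\1$, $\|\xdag\|_\1$ and $\|x-\xdag\|_\1$ along $M$ and its complement, with no further appeal to $A$ or $A^\ast$). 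You instead invoke the assumption \emph{twice}, first with $\xi=\sgn(P_S\xdag)$ to produce the Bregman-type estimate \eqref{eq:vi-e1}, then with $\xi'=\sgn(P_N(x-\xdag))$ to produce the null-space-type estimate \eqref{eq:vi-e2}, and combine them. Your decomposition mirrors the compressed-sensing viewpoint (separating a ``source-condition'' step from a ``null-space-property'' step) and makes the role of each ingredient transparent; the paper's single-shot argument is more economical and dispenses with the auxiliary index sets $S$ and $N$ entirely. Your strict-monotonicity argument for $\varphi$ is likewise different and arguably slicker: you extract $\inf_n\gamma_n>0$ from the extra hypothesis, bound $\varphi(t)\ge\frac{2}{1+c}(\inf_n\gamma_n)\,t$, and observe that a nondecreasing concave function dominating a positive linear function cannot be constant on any interval; the paper instead exhibits a sequence $t_m\to\infty$ along which the outer infimum in \eqref{eq:phi} is attained within the finite set $\{1,\dots,k\}$ and compares values directly.
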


\begin{remark}
The assumption that $\gamma_k<\inf_{l>k}\gamma_l$ for some $k$ is no relevant restriction since the $\gamma_n$ are
upper bounds for the source elements $\eta$ in Assumption~\ref{as:source}~(i).
We could choose a strictly increasing sequence here, for example.
Without this additional assumption it might happen that $\varphi$ is not strictly increasing.
Nonetheless, $\varphi$ would still remain nondecreasing.
\par
On the other hand, in \cite[Theorem~4.11]{Flemmingbuch12} it was shown that also variational inequalities with nondecreasing but not
necessarily strictly increasing function $\varphi$ yield convergence rates as long as $\varphi$ is strictly increasing
in a small neighborhood of zero. This is indeed the case here without additional assumptions on the behaviour of the
$\gamma_n$. It suffices to show that there is some $t>0$ with $\varphi(t)>0$, which is a consequence of Assumption~\ref{as:source}
and the boundedness of $A^\ast$. Then concavity of $\varphi$ implies the assertion.
\end{remark}

\begin{proof}[Proof of Theorem~\ref{th:vi}]
Set $\beta=\frac{1-c}{1+c}$ with $c\in[0,1)$ from (iii) in Assumption~\ref{as:source}
and fix $x\in\1$, $n\in\N$, $M\in\M_n$.
For
$$\xi:=P_M\sgn(x-x^\dagger)\in\bigcup_{\tilde{M}\in\M_n}\one_{\tilde{M}}$$
Assumption~\ref{as:source} provides $\eta\in Y^\ast$ and $\gamma_n>0$ such that
$$\|\eta\|_{Y^*}\leq\gamma_n,\quad\xi=P_{\supp\xi}A^\ast\eta,\quad\|(I-P_{\supp\xi})A^\ast\eta\|_\3\leq c.$$
Thus,
\begin{align*}
\|P_M(x-x^\dagger)\|_\1
&=\la\xi,x-x^\dagger\ra_{\3 \times \1}
=\la P_{\supp\xi}A^\ast\eta,x-x^\dagger\ra_{\3 \times \1}\\
&=\la P_{\supp\xi}A^\ast\eta-A^\ast\eta,x-x^\dagger\ra_{\3 \times \1}+\la A^\ast\eta,x-x^\dagger\ra_{\3 \times \1}\\
&=-\bigl\la(I-P_{\supp\xi})A^\ast\eta,(I-P_M)(x-x^\dagger)\bigr\ra_{\3 \times \1}+\la\eta,Ax-Ax^\dagger\ra_{Y^* \times Y}\\
&\leq c\|(I-P_M)(x-x^\dagger)\|_\1+\gamma_n\|Ax-Ax^\dagger\|_Y
\end{align*}
and the triangle inequality yields
\begin{equation}\label{eq:pm}
\|P_M(x-x^\dagger)\|_\1\leq c\left(\|(I-P_M)x\|_\1+\|(I-P_M)x^\dagger\|_\1\right)+\gamma_n\|Ax-Ax^\dagger\|_Y.
\end{equation}
Now
\begin{align*}
\lefteqn{\beta\|x-x^\dagger\|_\1-\|x\|_\1+\|x^\dagger\|_\1}\\
&=\beta\|P_M(x-x^\dagger)\|_\1+\beta\|(I-P_M)(x-x^\dagger)\|_\1-\|P_Mx\|_\1-\|(I-P_M)x\|_\1\\
&\quad+\|P_Mx^\dagger\|_\1+\|(I-P_M)x^\dagger\|_\1
\end{align*}
together with
$$\beta\|(I-P_M)(x-x^\dagger)\|_\1\leq\beta\|(I-P_M)x\|_\1+\beta\|(I-P_M)x^\dagger\|_\1$$
and
$$\|P_Mx^\dagger\|_\1=\|P_M(x-x^\dagger-x)\|_\1\leq\|P_M(x-x^\dagger)\|_\1+\|P_Mx\|_\1$$
shows
\begin{align*}
\lefteqn{\beta\|x-x^\dagger\|_\1-\|x\|_\1+\|x^\dagger\|_\1}\\
&\qquad\leq 2\|(I-P_M)x^\dagger\|_\1+(1+\beta)\|P_M(x-x^\dagger)\|_\1\\
&\qquad\quad\,-(1-\beta)\bigl(\|(I-P_M)x\|_\1+\|(I-P_M)x^\dagger\|_\1\bigr).
\end{align*}
Combining this estimate with the previous estimate \eqref{eq:pm} and taking into account that $\beta=\frac{1-c}{1+c}$
and $c=\frac{1-\beta}{1+\beta}$ we obtain
$$\beta\|x-x^\dagger\|_\1-\|x\|_\1+\|x^\dagger\|_\1
\leq 2\|(I-P_M)x^\dagger\|_\1+\frac{2}{1+c}\,\gamma_n\|Ax-Ax^\dagger\|_Y.$$
Taking the infimum over all $M\in\M_n$ and then over all $n\in\N$ proves that $\varphi$ has the form \eqref{eq:phi}.
\par
It remains to show that $\varphi$ is a concave index function.
At first we note that $\varphi$ is concave and upper semicontinuous as the infimum of affine functions.
This implies continuity on the interior $(0,\infty)$ of its domain.
By Assumption~\ref{as:source} we know that $\{1,\ldots,n\}\in\M_n$. Therefore
$$\varphi(0)=2\inf_{n\in\N}\inf_{M\in\M_n}\|(I-P_M)x^\dagger\|_\1
\leq 2\inf_{n\in\N}\sum_{k=n+1}^\infty\vert x^\dagger_k\vert=0.$$
Together with the nonnegativity and the upper semicontinuity this yields continuity of $\varphi$ at $0$
and therefore on the whole domain $[0,\infty)$.
\par
To show that $\varphi$ is strictly increasing we first note that due to concavity it suffices
to show $\varphi(t_1)<\varphi(t_2)<\ldots$ for some strictly increasing sequence $(t_m)_{m\in\N}$ with $t_m\to\infty$ if $m\to\infty$.
Set $c_n:=\inf_{M\in\M_n}\|(I-P_M)x^\dagger\|_\1$ for $n\in\N$ and choose $k$ such that $\gamma_k<\inf_{l>k}\gamma_l$.
If we choose the sequence $(t_m)_{m\in\N}$ such that for each $m \in \NN$
$$t_m\geq(1+c)\,\frac{c_k}{\gamma_n-\gamma_k}\quad\text{for all $n>k$},$$
then we see
$$c_k+\frac{1}{1+c}\,\gamma_k\,t_m\leq c_n+\frac{1}{1+c}\,\gamma_n\,t_m\quad\text{for all $n>k$}.$$
Thus, the infimum over $n\in\N$ in \eqref{eq:phi} with $t=t_m$ is attained at some $n_m\leq k$.
Consequently,
$$\varphi(t_{m-1})\leq 2c_{n_m}+\frac{2}{1+c}\,\gamma_{n_m}\,t_{m-1}<2c_{n_m}+\frac{2}{1+c}\,\gamma_{n_m}\,t_m=\varphi(t_m)$$
for all $m\geq 2$.
\end{proof}

\begin{remark}
If we choose $\M_n=\{\{1,\ldots,n\}\}$ then the inner infimum in \eqref{eq:phi} becomes
$$\inf_{M\in\M_n}\|(I-P_M)x^\dagger\|_\1=\sum_{k=n+1}^\infty\vert x^\dagger_k\vert,$$
which is a way to describe the decay of the components of the sequence $x^\dagger$.
Another way shows up if we choose $\M_n=\{M\subseteq\N:\vert M\vert=n\}$. Then
$$\inf_{M\in\M_n}\|(I-P_M)x^\dagger\|_\1=\sum_{k=n+1}^\infty\bigl\vert x^\dagger_{\kappa(k)}\bigr\vert$$
where $\kappa:\N\rightarrow\N$ is a reordering of $\N$ such that $x^\dagger_{\kappa(k)}\geq x^\dagger_{\kappa(k+1)}$
for all $k\in\N$.
\end{remark}

\section{Smooth basis}\label{sc:smooth}

In \cite{BurFleHof13} it was shown that convergence rates for $\ell^1$-regularization can be obtained even in case
of nonsparse solutions $x^\dagger$ if $\{e^{(k)}\}_{k\in\N}$, which is a Schauder basis in $c_0$ and in $\ell^q(\N)$ for all $1 \le q <\infty$, satisfies the series of range conditions
\begin{equation}\label{eq:series}
e^{(k)}=A^\ast f^{(k)} \quad \mbox{for some} \quad f^{(k)}\in Y^\ast \quad \mbox{and\,\,\, for all} \; k \in \N.
\end{equation}
We mention that we always have weak convergence
\begin{equation} \label{eq:weakA}
Ae^{(k)} \rightharpoonup 0 \quad \mbox{in $Y$},
\end{equation}
since $\{e^{(k)}\}_{k \in \N}$
is an orthonormal basis in $\2$ with $e^{(k)} \rightharpoonup 0$ in $\2$ and \linebreak $\widetilde A: X \to Y$ is a continuous linear operator.
Then it holds $\mathcal{R}(A^*) \subset c_0$ (cf.~\cite[Remark 2.5]{BurFleHof13}). If the operator $\widetilde A$ and hence $A$ are compact operators, then this is sufficient - but not necessary - to have even
\begin{equation} \label{eq:strongA}
\lim \limits_{k \to \infty} \|Ae^{(k)}\|_Y =0,
\end{equation}
and we refer to \cite{FHV15} regarding the distinction between the two cases (\ref{eq:weakA}) and (\ref{eq:strongA}). Under (\ref{eq:strongA}) the norms  $\|f^{(k)}\|_{Y^*}$ of the source elements  $f^{(k)}$
in (\ref{eq:series}) tend to infinity as $k \to \infty$.
It is not difficult to see that (\ref{eq:series}) can be interpreted as some kind of smoothness of the orthonormal basis $\{u^{(k)}\}_{k \in \N}$ in the separable Hilbert space $X$,
and we refer to \cite{AnzHofRam13} for several examples.
On the other hand, as the following proposition shows it turns out that  condition (\ref{eq:series}) is a special case of Assumption~\ref{as:source} in Section~\ref{sc:suff}.

\begin{proposition}
Assumption~\ref{as:source} with $\M_n=\{\{1,\ldots,n\}\}$ and $c=0$ in (iii) is satisfied if and only if the condition (\ref{eq:series}) is satisfied. The constants $\gamma_n$ in (i) of
Assumption~\ref{as:source} can be chosen as $\gamma_n=\sum_{k=1}^n\|f^{(k)}\|_{Y^*}$.
\end{proposition}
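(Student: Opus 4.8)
The plan is to prove the two implications separately, exploiting the observation that when $c=0$ in (iii) of Assumption~\ref{as:source} the conditions (ii) and (iii) together are equivalent to the single identity $A^\ast\eta=\xi$ in $\3=(\1)^\ast$; indeed $A^\ast\eta=P_{\supp\xi}A^\ast\eta+(I-P_{\supp\xi})A^\ast\eta$, and (iii) with $c=0$ forces the second summand to vanish, while (ii) identifies the first with $\xi$. Also recall, as noted after Assumption~\ref{as:source}, that for $\M_n=\{\{1,\ldots,n\}\}$ the requirement $\{1,\ldots,n\}\in\M_n$ is automatic and the set in \eqref{eq:cupm} equals $\one_{\{1,\ldots,n\}}=\{\xi\in\3:\supp\xi\subseteq\{1,\ldots,n\},\ \xi\in\one_\N\}$.

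For the direction ``\eqref{eq:series} $\Rightarrow$ Assumption~\ref{as:source}'' I would argue as follows. Fix $n\in\N$ and $\xi\in\one_{\{1,\ldots,n\}}$, write $\xi=\sum_{k=1}^n\xi_k e^{(k)}$ with $\xi_k\in\{-1,0,1\}$, and set $\eta:=\sum_{k=1}^n\xi_k f^{(k)}\in Y^\ast$. By linearity of $A^\ast$ and \eqref{eq:series} one gets $A^\ast\eta=\sum_{k=1}^n\xi_k e^{(k)}=\xi$, which by the remark above yields (ii) and (iii) with $c=0$ simultaneously; the triangle inequality gives $\|\eta\|_{Y^\ast}\le\sum_{k=1}^n|\xi_k|\,\|f^{(k)}\|_{Y^\ast}\le\sum_{k=1}^n\|f^{(k)}\|_{Y^\ast}=:\gamma_n$, which is (i) with exactly the asserted constants (and $\gamma_n>0$ since $f^{(1)}\ne 0$, as otherwise $e^{(1)}=A^\ast f^{(1)}=0$).

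For the converse, assume Assumption~\ref{as:source} holds with $\M_n=\{\{1,\ldots,n\}\}$ and $c=0$. Fix $k\in\N$ and apply the assumption with $n=k$ and $\xi=e^{(k)}\in\one_{\{1,\ldots,k\}}$: it produces $\eta\in Y^\ast$ with $P_{\{k\}}A^\ast\eta=e^{(k)}$ and $\|(I-P_{\{k\}})A^\ast\eta\|_\3\le 0$, hence $(I-P_{\{k\}})A^\ast\eta=0$; adding the two identities gives $A^\ast\eta=e^{(k)}$, so $f^{(k)}:=\eta$ witnesses \eqref{eq:series}. I do not expect a genuine obstacle here: the only point to keep straight is the bookkeeping with $\supp\xi$ when $\xi$ vanishes at some indices in $\{1,\ldots,n\}$, which is harmless because $\xi$ is supported in $\supp\xi$ so $P_{\supp\xi}\xi=\xi$, and everything is already subsumed by the identity $A^\ast\eta=\xi$; it is worth emphasising, though, that the equivalence is tied to $c=0$, since a positive $c$ would only give $P_{\supp\xi}A^\ast\eta=\xi$ with an $\3$-small tail, strictly weaker than \eqref{eq:series}.
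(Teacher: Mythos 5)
Your proof is correct and follows essentially the same route as the paper: the key observation that (ii) and (iii) with $c=0$ collapse to $A^\ast\eta=\xi$, the choice $\eta=\sum_{k=1}^n\xi_k f^{(k)}$ with the triangle-inequality bound for sufficiency, and the choice $\xi=e^{(k)}$ for necessity all coincide with the paper's argument. The extra bookkeeping you supply (the decomposition $A^\ast\eta=P_{\supp\xi}A^\ast\eta+(I-P_{\supp\xi})A^\ast\eta$ and the positivity of $\gamma_n$) is sound and merely makes explicit what the paper leaves implicit.
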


\begin{proof}
With $c=0$ in Assumption~\ref{as:source} items (ii) and (iii) together are equivalent to $\xi=A^\ast\eta$.
To proof necessity of basis smoothness choose $\xi=e^{(n)}$ in Assumption~\ref{as:source}.
To see sufficiency we write
$$\xi=\sum_{k=1}^n(\sgn\xi_k)e^{(k)}=\sum_{k=1}^n(\sgn\xi_k)A^\ast f^{(k)}=A^\ast\sum_{k=1}^n(\sgn\xi_k)f^{(k)}$$
and estimate
$$\left\|\sum_{k=1}^n(\sgn\xi_k)f^{(k)}\right\|_{Y^*}\leq\sum_{k=1}^n\|f^{(k)}\|_{Y^*}.$$
That is, Assumption~\ref{as:source} is satisfied with $\eta=\sum_{k=1}^n(\sgn\xi_k)f^{(k)}$.
\end{proof}

\section{Nonsmooth basis}\label{sc:nonsmooth}

In \cite{FleHeg14} it was shown that there are relevant examples where the operator $A$ does not satisfy the condition (\ref{eq:series}), which means that the corresponding orthonormal basis of the
underlying Hilbert space is not smooth enough. Consequently, a weaker condition also yiedling convergence rates has been formulated there.
The next proposition states that the assumptions of \cite{FleHeg14} are covered by Assumption~\ref{as:source}.

\begin{proposition}
Assumption~\ref{as:source} with $\M_n=\{\{1,\ldots,n\}\}$ is satisfied if for each $n\in\N$ and all $k\in\{1,\ldots,n\}$
there are $f^{(n,k)}\in Y^\ast$ such that
\begin{itemize}
\item[(a)]
for each $k\in\{1,\ldots,n\}$ we have $[A^\ast f^{(n,k)}]_l=0$ for $l\in\{1,\ldots,n\}\setminus k$ and
$[A^\ast f^{(n,k)}]_k=1$,
\item[(b)]
$\sum\limits_{k=1}^n\bigl\vert[A^\ast f^{(n,k)}]_l\bigr\vert\leq c$ for all $l>n$ and some $c\in[0,1)$.
\end{itemize}
The constants $c$ here and in Assumption~\ref{as:source}~(iii) coincide. The constants $\gamma_n$ in
Assumption~\ref{as:source}~(i) can be chosen as $\gamma_n=\sum_{k=1}^n\|f^{(n,k)}\|_{Y^*}$.
\end{proposition}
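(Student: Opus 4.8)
The plan is to verify the three items of Assumption~\ref{as:source} directly, by writing down an explicit source element for each admissible $\xi$ as a signed combination of the given functionals $f^{(n,k)}$. Since here $\M_n=\{\{1,\ldots,n\}\}$, the requirement $\{1,\ldots,n\}\in\M_n$ is automatic, and the set in \eqref{eq:cupm} is precisely
$$\bigcup_{M\in\M_n}\one_M=\{\xi\in\3:\supp\xi\subseteq\{1,\ldots,n\}\text{ and }\xi_k\in\{-1,0,1\}\text{ for all }k\in\N\}.$$
So I fix $n\in\N$ and such a $\xi$, note that $\sgn\xi_k=\xi_k$ because $\xi_k\in\{-1,0,1\}$, and set
$$\eta:=\sum_{k=1}^n\xi_k\,f^{(n,k)}\in Y^\ast.$$

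The verification is then a short coordinatewise computation. For (i), the triangle inequality together with $|\xi_k|\le 1$ gives $\|\eta\|_{Y^\ast}\leq\sum_{k=1}^n|\xi_k|\,\|f^{(n,k)}\|_{Y^\ast}\leq\sum_{k=1}^n\|f^{(n,k)}\|_{Y^\ast}=\gamma_n$, which is the stated bound and is independent of $\xi$. Next I compute $A^\ast\eta=\sum_{k=1}^n\xi_k A^\ast f^{(n,k)}$ entrywise. By (a), for $l\in\{1,\ldots,n\}$ the number $[A^\ast f^{(n,k)}]_l$ equals $1$ if $l=k$ and $0$ otherwise, hence $[A^\ast\eta]_l=\xi_l$ for every $l\in\{1,\ldots,n\}$; in particular $[A^\ast\eta]_l=\xi_l$ on $\supp\xi$ and $[A^\ast\eta]_l=\xi_l=0$ on $\{1,\ldots,n\}\setminus\supp\xi$. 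This shows $P_{\supp\xi}A^\ast\eta=\xi$, i.e.\ (ii). For (iii) I observe that $(I-P_{\supp\xi})A^\ast\eta$ vanishes on $\supp\xi$ by construction, vanishes on $\{1,\ldots,n\}\setminus\supp\xi$ by the line just computed, and for $l>n$ obeys
$$\bigl\vert[A^\ast\eta]_l\bigr\vert\leq\sum_{k=1}^n\vert\xi_k\vert\,\bigl\vert[A^\ast f^{(n,k)}]_l\bigr\vert\leq\sum_{k=1}^n\bigl\vert[A^\ast f^{(n,k)}]_l\bigr\vert\leq c$$
by (b). Taking the supremum over $l$ yields $\|(I-P_{\supp\xi})A^\ast\eta\|_\3\leq c$ with the same $c\in[0,1)$, so all of (i)--(iii) hold with the asserted $\gamma_n$ and $c$.

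I do not expect any genuine obstacle: the whole argument is bookkeeping on coordinates. The only point that deserves a moment of care is the splitting of the complement of $\supp\xi$ into the indices $l\in\{1,\ldots,n\}\setminus\supp\xi$ and the indices $l>n$: on the former block (iii) is forced to hold with bound $0$ by (a), so that the nontrivial contribution to the $\ell^\infty$-bound comes only from $l>n$ and is exactly what (b) controls. One should also remark that this is a one-directional implication (unlike the smooth-basis case), since here no equivalence with (a)--(b) is claimed.
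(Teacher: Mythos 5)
Your proof is correct and follows essentially the same route as the paper: both construct the source element $\eta=\sum_k\xi_k f^{(n,k)}$ and verify (i)--(iii) directly, the only cosmetic difference being that you check (ii) and (iii) coordinatewise while the paper phrases the same computation with the projections $P_{\supp\xi}$ and $P_{\{1,\ldots,n\}}$. The key observation — that on $\{1,\ldots,n\}\setminus\supp\xi$ condition (a) forces $[A^\ast\eta]_l=0$, so only the tail $l>n$ contributes to the bound in (iii) — is exactly the one the paper uses.
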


\begin{proof}
Assume that there are $f^{(n,k)}$ as in the proposition. Then for $n\in\N$ and $\xi$ as in Assumption~\ref{as:source} we have
\begin{align*}
\xi
&=\sum_{k\in\supp\xi}\xi_ke^{(k)}
=\sum_{k\in\supp\xi}\xi_k\left(A^\ast f^{(n,k)}-(I-P_{\supp\xi})A^\ast f^{(n,k)}\right)\\
&=A^\ast\sum_{k\in\supp\xi}\xi_kf^{(n,k)}-(I-P_{\supp\xi})A^\ast\sum_{k\in\supp\xi}\xi_kf^{(n,k)}.
\end{align*}
With
$$\eta:=\sum_{k\in\supp\xi}\xi_kf^{(n,k)}$$
this on the one hand yields
$$\xi=P_{\supp\xi}\xi=P_{\supp\xi}A^\ast\eta-P_{\supp\xi}(I-P_{\supp\xi})A^\ast\eta=P_{\supp\xi}A^\ast\eta$$
and on the other hand
\begin{align*}
\|(I-P_{\supp\xi})A^\ast\eta\|_\3
&=\left\|\sum_{k\in\supp\xi}\xi_k(I-P_{\{1,\ldots,n\}})A^\ast f^{(n,k)}\right\|_\3\\
&=\sup_{l>n}\sum_{k\in\supp\xi}\xi_k\bigl[A^\ast f^{(n,k)}\bigr]_l
\leq\sup_{l>n}\sum_{k=1}^n\bigl\vert\bigl[A^\ast f^{(n,k)}\bigr]_l\bigr\vert\\
&\leq c.
\end{align*}
Thus, Assumption~\ref{as:source} is satisfied with
$$\gamma_n=\|\eta\|_{Y^*}\leq\sum_{k=1}^n\|f^{(n,k)}\|_{Y^*}.$$
\end{proof}

\begin{remark}
In contrast to the case of basis smoothness we now do not have full equivalence of Assumption~\ref{as:source} and
the assumptions in \cite{FleHeg14}. But the interested reader may inspect the assumptions in detail and find that the difference
between both concepts is only a tiny technicality and in no way substantial.
\end{remark}

\section{Restricted isometry property}\label{sc:rip}

In \cite{CanRomTao06} (see also \cite[Section~5]{Grasmei11}) the condition
\begin{multline}\label{eq:uup}
\forall \, n \in \NN \; \exists\, \zeta_n, \gamma_n \in (0,\infty) \text{ such that } \zeta_n\|x\|_\1\geq\|Ax\|_Y\geq\frac{1}{\gamma_n}\|x\|_\1
\\
\quad\text{for all $x\in\1$ with $\vert\supp x\vert\leq n$}
\end{multline}
is used to prove error estimates for the recovery of sparse signals, where $|\supp x|$ denotes the number of nonzero components in the infinite sequence $x=(x_k)_{k=1}^\infty$.
This condition is known as \emph{restricted isometry property} or \emph{uniform uncertainty principle}.
In the following we will elucidate the relation of the lower bound
\begin{equation}\label{eq:rip}
\|Ax\|_Y\geq\frac{1}{\gamma_n}\|x\|_\1\quad\text{for all $x\in\1$ with $\vert\supp x\vert\leq n$}
\end{equation}
to the results obtained in the previous sections. To distinguish it from the (twosided) uniform uncertainty principle,
we will call it \emph{restricted injectivity property}.
The following proposition shows a tight connection to Assumption~\ref{as:source}.

\begin{proposition} \label{pro:nearequi}
The items (i) and (ii) of Assumption~\ref{as:source} with $$\M_n=\{M\subseteq\N:\vert M\vert\leq n\}$$ are satisfied if and
only if condition~\eqref{eq:rip} holds. The constants $\gamma_n$ coincide in both conditions for all $n \in \N$.
\end{proposition}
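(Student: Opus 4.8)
The plan is to prove the two implications separately, the Hahn--Banach theorem being the only real tool. Fix $n\in\N$. Since here $\M_n=\{M\subseteq\N:\vert M\vert\leq n\}$, the sequences $\xi$ in \eqref{eq:cupm} are exactly those in $\one_\N$ with $\vert\supp\xi\vert\leq n$. A preliminary remark used in both directions: condition \eqref{eq:rip} forces, for every $M\subseteq\N$ with $\vert M\vert\leq n$, the vectors $\{Ae^{(k)}\}_{k\in M}$ to be linearly independent, because any $x\in\1$ with $\supp x\subseteq M$ and $Ax=0$ satisfies $\|x\|_\1\leq\gamma_n\|Ax\|_Y=0$.

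For the implication \eqref{eq:rip}$\Rightarrow$(i)+(ii), fix $\xi\in\bigcup_{M\in\M_n}\one_M$ and put $S:=\supp\xi$, so $\vert S\vert\leq n$. On the (finite-dimensional) subspace $V_S:=\mathrm{span}\{Ae^{(k)}:k\in S\}\subseteq Y$ every $v$ has, by the preliminary remark, a unique representation $v=Ax$ with $\supp x\subseteq S$, so we may define a linear functional $\ell$ on $V_S$ by $\ell(v):=\sum_{k\in S}\xi_k x_k$. Using $\vert\xi_k\vert\leq 1$ together with \eqref{eq:rip} (applicable since $\vert S\vert\leq n$) we get $\vert\ell(v)\vert\leq\|x\|_\1\leq\gamma_n\|Ax\|_Y=\gamma_n\|v\|_Y$, hence $\ell$ is bounded by $\gamma_n$. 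Hahn--Banach produces an extension $\eta\in Y^\ast$ with $\|\eta\|_{Y^\ast}\leq\gamma_n$, which is (i). Evaluating at $v=Ae^{(k)}$ for $k\in S$ gives $[A^\ast\eta]_k=\la\eta,Ae^{(k)}\ra_{Y^\ast\times Y}=\ell(Ae^{(k)})=\xi_k$, so $P_{\supp\xi}A^\ast\eta=\xi$, which is (ii); and the constant $\gamma_n$ is the one from \eqref{eq:rip}.

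For the converse, assume (i) and (ii) and take $x\in\1$ with $\vert\supp x\vert\leq n$. Then $\xi:=\sgn x$ lies in $\one_\N$ and is supported on $\supp x$, hence belongs to $\bigcup_{M\in\M_n}\one_M$; Assumption~\ref{as:source} gives $\eta\in Y^\ast$ with $\|\eta\|_{Y^\ast}\leq\gamma_n$ and $P_{\supp x}A^\ast\eta=\sgn x$, so $[A^\ast\eta]_k=[\sgn x]_k$ and thus $x_k[A^\ast\eta]_k=\vert x_k\vert$ for every $k\in\supp x$. Summing,
$$\|x\|_\1=\sum_{k\in\supp x}x_k[A^\ast\eta]_k=\la A^\ast\eta,x\ra_{\3\times\1}=\la\eta,Ax\ra_{Y^\ast\times Y}\leq\gamma_n\|Ax\|_Y,$$
which is \eqref{eq:rip} with the same $\gamma_n$. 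I expect no genuine obstacle; the one spot needing care is the well-definedness of $\ell$ on $V_S$, which is exactly where the restricted injectivity extracted from \eqref{eq:rip} is used, plus the routine bookkeeping of keeping the dual pairings $\3\times\1$ and $Y^\ast\times Y$ apart and invoking $\vert\xi_k\vert\leq 1$ in the right place.
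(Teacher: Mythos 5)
Your proof is correct, and both directions deliver exactly what is claimed, including the matching constants $\gamma_n$. The underlying mechanism is the same duality idea as in the paper, but the execution differs in a way worth noting. For the implication \eqref{eq:rip}$\Rightarrow$(i)+(ii) the paper derives the estimate $\vert\la\xi,x\ra_{\3\times\1}\vert\leq\gamma_n\|AP_{\supp\xi}x\|_Y$ and then invokes a subdifferential characterization (\cite[Lemma~8.31]{Scherzetal09}) of the convex function $x\mapsto\gamma_n\|AP_{\supp\xi}x\|_Y$ at zero to obtain $\eta$; this bypasses any need to invert $A$ on sparsely supported sequences. You instead make the Hahn--Banach step explicit: you first extract from \eqref{eq:rip} the injectivity of $A$ on sequences supported in $S=\supp\xi$, use it to define the functional $\ell$ on the finite-dimensional space $V_S$, bound $\ell$ by $\gamma_n$ via \eqref{eq:rip}, and extend. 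This is more self-contained (no external lemma) at the cost of the extra well-definedness discussion, which you correctly identify and settle. For the converse your argument is a slight streamlining of the paper's: where the paper re-invokes the subdifferential lemma to convert $\xi=P_{\supp\xi}A^\ast\eta$ back into an inequality and then specializes $\tilde x=x$, you simply compute $\|x\|_\1=\la A^\ast\eta,x\ra_{\3\times\1}=\la\eta,Ax\ra_{Y^\ast\times Y}\leq\gamma_n\|Ax\|_Y$ directly; both are valid and give the same constant.
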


\begin{proof}
Let \eqref{eq:rip} be satisfied and let $x\in\1$. Then for each $n\in\N$ and each fixed $\xi$ as in Assumption~\ref{as:source} we can estimate
\begin{align*}
\vert\la\xi,x\ra_{\3 \times \1}\vert
&=\left\vert\sum_{k\in\supp\xi}\xi_k x_k\right\vert
\leq \sum_{k\in\supp\xi}\vert\xi_k\vert\vert x_k\vert\\
&\leq\sum_{k\in\supp\xi}\vert x_k\vert
=\|P_{\supp\xi}x\|_\1
\leq\gamma_n\|AP_{\supp\xi}x\|_Y.
\end{align*}
That is, $\xi$ belongs to the subdifferential of the convex function $x\mapsto\gamma_n\|AP_{\supp\xi}x\|_Y$ at zero (cf.\ \cite[Lemma~8.31]{Scherzetal09}).
In other words, there is some $\eta\in Y^\ast$ with $\|\eta\|_{Y^*}\leq\gamma_n$ such that
$$\xi=(AP_{\supp\xi})^\ast\eta=P_{\supp\xi}A^\ast\eta.$$
\par
Now assume that (i) and (ii) of Assumption~\ref{as:source} are true.
Fix $n\in\N$ and $x\in\1$ with $\vert\supp x\vert\leq n$. Set $\xi=\sgn x$. Then there is some $\eta\in Y^\ast$ with
$\|\eta\|_{Y^*}\leq\gamma_n$ such that $\xi=P_{\supp\xi}A^\ast\eta$, which is equivalent to
$$\vert\la\xi,\tilde{x}\ra_{\3 \times \1}\vert\leq\gamma_n\|AP_{\supp\xi}\tilde{x}\|_{Y}\quad\text{for all $\tilde{x}\in\1$}$$
(again by \cite[Lemma~8.31]{Scherzetal09}).
Choosing $\tilde{x}=x$ in this inequality we see $\vert\la\xi,\tilde{x}\ra_{\3 \times \1}\vert=\|x\|_\1$
and $P_{\supp \xi}\tilde{x}=x$, which completes the proof.
\end{proof}

From the proposition we deduce that the restricted injectivity property \eqref{eq:rip} can be used to prove convergence rates
for $\ell^1$-regularization even if the solution $x^\dagger$ is not sparse. One only has to add Assumption~\ref{as:source}~(iii).

\begin{proposition}
(a) The condition (\ref{eq:rip}) cannot hold whenever (\ref{eq:strongA}) is valid.
(b) If  (\ref{eq:rip}) holds, then we have
\begin{equation} \label{eq:ge}
0<\gamma_1 \le \gamma_2 \le ... \le \gamma_n \le \gamma_{n+1} \le ...  \to \infty \quad \mbox{as} \quad n \to \infty
\end{equation}
for the constants $\gamma_n$ in (\ref{eq:rip}).
\end{proposition}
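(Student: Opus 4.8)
The plan is to treat the two assertions separately: (a) is immediate from the definitions, while in (b) only the divergence $\gamma_n\to\infty$ requires an argument, which I would base on the weak nullity \eqref{eq:weakA} via Mazur's lemma.

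For part (a), I would simply test the restricted injectivity property \eqref{eq:rip} at the lowest sparsity level $n=1$ against the unit sequences. Since $\vert\supp e^{(k)}\vert=1$ and $\|e^{(k)}\|_\1=1$, condition \eqref{eq:rip} with $n=1$ forces $\|Ae^{(k)}\|_Y\ge 1/\gamma_1>0$ for every $k\in\N$, which is incompatible with $\|Ae^{(k)}\|_Y\to 0$. Hence \eqref{eq:rip} cannot hold whenever \eqref{eq:strongA} is valid; there is no obstacle here.

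For part (b), positivity of the $\gamma_n$ is built into the formulation of \eqref{eq:rip}. Monotonicity is essentially bookkeeping: the admissible class $\{x\in\1:\vert\supp x\vert\le n\}$ grows with $n$, so any constant that works at level $n+1$ also works at level $n$; taking the $\gamma_n$ to be the smallest admissible constants, namely $\gamma_n=\sup\{\|x\|_\1/\|Ax\|_Y:0\ne x\in\1,\ \vert\supp x\vert\le n\}$ (the denominators are nonzero by injectivity of $A$, and finiteness of this supremum is exactly what \eqref{eq:rip} asserts), gives $\gamma_n\le\gamma_{n+1}$ for all $n$, with $\gamma_1>0$. The substantive claim is $\gamma_n\to\infty$. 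Here I would use that $Ae^{(k)}\rightharpoonup 0$ in $Y$ by \eqref{eq:weakA}, so by Mazur's lemma $0$ lies in the norm closure of $\mathrm{conv}\{Ae^{(k)}:k\in\N\}$; thus for each $j\in\N$ there are finitely many weights $\lambda_k^{(j)}\ge 0$ with $\sum_k\lambda_k^{(j)}=1$ and $\bigl\|\sum_k\lambda_k^{(j)}Ae^{(k)}\bigr\|_Y<1/j$. Setting $x^{(j)}:=\sum_k\lambda_k^{(j)}e^{(k)}\in\1$, the disjointness of the supports of the $e^{(k)}$ gives $\|x^{(j)}\|_\1=\sum_k\lambda_k^{(j)}=1$, while $\|Ax^{(j)}\|_Y<1/j$ and $m_j:=\vert\supp x^{(j)}\vert<\infty$. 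Applying \eqref{eq:rip} at level $n=m_j$ yields $1/\gamma_{m_j}\le\|Ax^{(j)}\|_Y<1/j$, i.e.\ $\gamma_{m_j}>j$; since a nondecreasing sequence with an unbounded subsequence diverges, $\gamma_n\to\infty$.

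I expect the only genuine step to be the passage through Mazur's lemma (equivalently: if $(\gamma_n)$ stayed bounded, $A$ would be bounded below on the dense subspace of finitely supported sequences, hence on all of $\1$, so $\mathcal{R}(A)$ would be closed, contradicting \cite[Proposition~4.6]{FHV15}); everything else is definitional. The two points needing a word of care are that $\|x^{(j)}\|_\1$ equals the sum of the convex weights — this uses disjointness of the supports of distinct $e^{(k)}$ — and that $m_j$ is finite, which is guaranteed because Mazur produces finite convex combinations.
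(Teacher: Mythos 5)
Your proof is correct. Part (a) coincides with the paper's argument (test \eqref{eq:rip} at sparsity level $1$ with the unit sequences), and your explicit normalization of the $\gamma_n$ as the optimal constants $\sup\{\|x\|_\1/\|Ax\|_Y\}$ makes the monotonicity claim, which the paper dismisses as ``evident by inspection,'' precise. Where you genuinely diverge is the divergence step $\gamma_n\to\infty$ in (b): the paper argues by contradiction that a bounded sequence $(\gamma_n)$ would make $A$ bounded below on the finitely supported sequences, hence by density and boundedness of $A$ on all of $\1$, forcing $\mathcal{R}(A)=\overline{\mathcal{R}(A)}$ and contradicting the ill-posedness established in \cite[Proposition~4.6]{FHV15} --- exactly the alternative you sketch in your closing parenthetical. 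Your primary route instead exploits the weak convergence \eqref{eq:weakA} directly: Mazur's lemma produces finitely supported unit vectors $x^{(j)}$ with $\|Ax^{(j)}\|_Y<1/j$, whence $\gamma_{\vert\supp x^{(j)}\vert}>j$ and the nondecreasing sequence $(\gamma_n)$ is unbounded. Both arguments rest on standing properties of the setting $A=\widetilde A\circ L$ (the paper asserts \eqref{eq:weakA} always holds there, just as ill-posedness always holds), so neither is more general; your version is arguably more self-contained in that it avoids invoking the external ill-posedness result and instead constructs the near-kernel vectors explicitly, at the modest cost of the Mazur step. The two points you flag as needing care (disjoint supports giving $\|x^{(j)}\|_\1=1$, and finiteness of the convex combinations) are indeed the only places the construction could slip, and you handle both correctly.
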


\begin{remark}
Part (a) tells us in particular that no compact operator $A$ satisfies the restricted injectivity property (\ref{eq:rip}).
Part (b) implies: If (\ref{eq:strongA}) fails and only (\ref{eq:weakA}) is valid, then due to the ill-posedness of equation (\ref{eq:opeq}) expressed by $\mathcal{R}(A) \not= \overline{\mathcal{R}(A)}$,
which always takes place under the setting of the present paper, we have in case of the validity of the  restricted injectivity property the condition
(\ref{eq:ge})
for the constants $\gamma_n$ in (\ref{eq:rip}). Moreover, taking into account Proposition~\ref{pro:nearequi} this condition also applies to the corresponding constants $\gamma_n$  in Assumption~\ref{as:source}~(i).

\end{remark}

\begin{proof}
The inequality $\gamma_n \le \gamma_{n+1}$ for all $n \in \N$ is evident by inspection of formula (\ref{eq:rip}).
Substituting $x:=e^{(n)}$  with $|\supp(e^{(n)})|=1$ into (\ref{eq:rip}) we have $$\|Ae^{(n)}\|_Y \ge \frac{1}{\gamma_1}\|e^{(n)}\|_{\1}=\frac{1}{\gamma_1}>0.$$
This contradicts (\ref{eq:strongA}) and establishes claim (a).
If $(\gamma_n)_{n \in \NN}$  is bounded then there is some constant $\underline c >0$ such that $\frac{1}{\gamma_n} \ge \underline c$ for all $n \in \N$ in (\ref{eq:rip}).
Consequently, for $\hat x =(\hat x_k)_{k=1}^\infty \in \1$ and $P_n \hat x:=(\hat x_1,...,\hat x_n,0,0,...)$
\[
|\supp (P_n\hat x)| \le n\quad \text{ and }\quad\lim \limits_{n \to \infty}\|\hat x-P_n \hat x\|_{\1}=0
\]
and furthermore
\[
\|AP_n \hat x\|_Y\geq\underline{c}\,\|P_n \hat x\|_\1 \quad\text{for all $n \in \N.$}
\]
Since $A$ is bounded and $\hat x$ was an arbitrary element in $\1$ we even have $$\|A  x\|_Y\geq\underline{c}\,\| x\|_\1 \quad\text{for all $ x \in \1.$}$$
This, however, contradicts the ill-posedness of the problem expressed by $\mathcal{R}(A) \not= \overline{\mathcal{R}(A)}$.
\end{proof}

\section{Examples for the restricted isometry  property}\label{sc:examples}

We present two examples satisfying the restricted injectivity property, hence
Proposition \ref{pro:nearequi} can be applied to them. In fact, the examples 
obey the full uniform uncertainty principle \eqref{eq:uup} as well, with
explicitly known constants.

\begin{example}[Denoising]
We apply (\ref{eq:l1}) to solve a denoising problem based on equation (\ref{eq:opeq}) with $Y=\ell^q(\N), \;1<q \le \infty,$
the noncompact embedding operator $A:\1 \to \ell^q(\N)$ satisfying $\mathcal{R}(A) \not= \overline{\mathcal{R}(A)}$,
and noisy data $y^\delta \in \ell^q(\N)$ (cf.~\cite[Section~5]{FHV15}).
Then we have for $x \in \1$ with $|\supp x| \le n$ the estimates
$$\|Ax\|_{\ell^q(\N)}=\left(\sum _{i=1}^n |x_{\nu_i}|^q\right)^{1/q}  \ge \frac{\sum \limits _{i=1}^n |x_{\nu_i}|}{n^{1-1/q}}=\frac{\|x\|_\1}{n^{1-1/q}} $$
and obviously (\ref{eq:rip}) with $\gamma_n=n^{1-1/q}$ for $1<q<\infty$ and $\gamma_n=n$ in the limit case $q=\infty$.
The constants $\gamma_n$ tend always to infinity as $n \to \infty$.
By the well known inclusion $\ell^1(\NN) \subsetneq \ell^q(\NN)$ with
$\|x\|_{\ell^q(\NN)} \leq \|x\|_{\ell^1(\NN)}$, it follows immediately
that the twosided restricted isometry property \eqref{eq:uup} holds with $\zeta_n=1$
for all $n \in \NN$.
\end{example}

\begin{example}[Restriction operator on the Wiener algebra]
Let $X$ be the Wiener algebra, i.e., the vector space of all complexvalued functions $f$ on the unit circle $\TT$ attaining the form
$$f(t)=\sum\limits_{k\in \ZZ} x_k e^{2\pi ikt}\quad\text{with}\quad x\in\ell^1(\ZZ)$$
equipped with the norm
\[
 \|  f\| _X=\left\| \sum\limits_{k\in \ZZ} x_k e^{2\pi ikt} \right\|_X
:=\sum\limits_{k\in\ZZ}\vert x_k\vert =\|x\|_{\ell^1(\ZZ)}.
\]

Given a measurable subset $E\subseteq\TT$ with positive Lebesgue measure $\vert E\vert > 0$ we consider the operators
\begin{align*}
 L&:\ell^1(\ZZ)\rightarrow X,\quad (x_k)_{k=-\infty}^{+\infty} \mapsto \sum\limits_{k\in \ZZ} x_k e^{2\pi ikt}\\
 \widetilde{A}_E&: X\rightarrow Y := L^\infty(\TT),\quad \widetilde{A}_E f=\chi_Ef.
\end{align*}
Here $L^\infty(\TT)$ is equipped with the sup-norm $\| \cdot\|_{L^\infty(\TT )}$ and $\chi_E$ denotes the characteristic function of $E$ (one on $E$, zero else).
The Turan Lemma in the version of Nazarov (cf.~\cite{Nazarov94}) says:
\begin{lemma}[Nazarov]
Let $n\in \NN, x_1,\dots x_n \in \CC, m_1<\dots <m_n\in \ZZ$ and
\[
 p_x (z) =\sum\limits_{k=1}^n x_k z^{m_k} =\sum\limits_{k=1}^n x_k e^{2\pi im_k t}
\]
 be a trigonometric polynomial on $\TT$ and $E\subseteq \TT$ measurable with $\vert E\vert >0$.
Then
\begin{equation}\label{19}
 \|  p_x\| _\cW =\sum\limits_{k=1}^n \vert x_k\vert \leq \left(\frac{16 \rm e}{\pi \vert E\vert}\right)^{n-1} \sup\limits_{t\in E}\vert p(t)\vert
\leq
 \left(\frac{14}{\vert E\vert}\right)^{n-1}\| \chi_E p\|_{L^\infty(\TT )}
\end{equation}
\end{lemma}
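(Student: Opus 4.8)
The plan is to split the two inequalities, the second of which follows from the first. Since $\tfrac{16\mathrm{e}}{\pi}=13.84\ldots<14$ and $n-1\ge 0$, it suffices to have the first inequality with $\sup_{t\in E}\vert p(t)\vert$ replaced by $\|\chi_Ep\|_{L^\infty(\TT)}=\esssup_{t\in E}\vert p(t)\vert$; and the set $N:=\{t\in E:\vert p(t)\vert>\|\chi_Ep\|_{L^\infty(\TT)}\}$ is Lebesgue null by the definition of the essential supremum, so that $E':=E\setminus N$ has $\vert E'\vert=\vert E\vert>0$ and $\sup_{t\in E'}\vert p(t)\vert\le\|\chi_Ep\|_{L^\infty(\TT)}$, whence applying the first inequality to $E'$ yields the second. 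Thus everything reduces to the Tur\'an--Nazarov estimate $\sum_{k=1}^n\vert x_k\vert\le\bigl(\tfrac{16\mathrm{e}}{\pi\vert E\vert}\bigr)^{n-1}\sup_{t\in E}\vert p(t)\vert$.

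For this estimate I would first normalise: multiplying $p$ by the unimodular factor $\mathrm{e}^{-2\pi\mathrm{i}m_1t}$ changes neither $\sum_k\vert x_k\vert$ nor $\vert p(t)\vert$, so one may assume $m_1=0$ and regard $p$ as the boundary trace of an algebraic polynomial $P(z)=\sum_{k=1}^nx_kz^{m_k}$ with at most $n$ nonzero coefficients; the feature that must survive every step is that the exponent depends only on the number $n$ of active frequencies and never on the degree $m_n$. The backbone is the classical Tur\'an lemma on arcs: for an arc $I\subseteq\TT$ one has $\sum_{k=1}^n\vert x_k\vert\le(c_0/\vert I\vert)^{n-1}\sup_{t\in I}\vert p(t)\vert$ with an absolute constant $c_0$. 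I would prove this by induction on $n$, the case $n=1$ being immediate since $\vert p\vert$ is then constant; in the inductive step one applies a finite-difference operator $q(t):=p(t+\tau)-p(t)$, which annihilates the lowest frequency and hence has one frequency fewer while $\sup\vert q\vert\le 2\sup\vert p\vert$ on a slightly shortened arc, recovers the lowest coefficient of $p$ by integrating over $I$, and then optimises over the shift $\tau$ and over the placement of the interpolation nodes in $I$ --- which have to be chosen so that the associated generalised Vandermonde stays well conditioned, using that on an arc the exponentials form a Chebyshev-type system. These optimisations are what bring a factor of Euler's number into the constant.

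The hard part will be the passage from arcs to an arbitrary measurable set $E$; this is exactly Nazarov's contribution beyond the classical interval result. Pigeonholing $E$ against a fine partition of $\TT$ into arcs only produces an arc $I$ with $\vert E\cap I\vert/\vert I\vert\ge\vert E\vert$, i.e.\ an arc merely \emph{met} by $E$ in a fixed proportion rather than \emph{filled} by it, and a single such step loses nothing in the exponent. The remedy is a Remez-type estimate for exponential polynomials: one with $n$ frequencies that is bounded by $M$ on a set of measure $\mu$ cannot exceed $(c_1/\mu)^{n-1}M$ anywhere on $\TT$. I would obtain this through a multiscale covering argument that repeatedly invokes the arc estimate; because the arc estimate is already degree-free, the degree never re-enters, and the exponent $n-1$ reflects that the admissible exponential polynomials form an $n$-dimensional family. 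Combining this with the arc estimate --- and with the elementary bound $\sum_k\vert x_k\vert\le n\sup_{\TT}\vert p\vert$ on the Fourier coefficients, or, better, by running the induction directly on the Wiener norm so as to avoid the stray factor $n$ --- and then tracking the numerical constants carefully produces the stated inequality with $\tfrac{16\mathrm{e}}{\pi}$. I expect the transference step and the bookkeeping of the geometric constants to be the delicate points; everything else is routine. Of course, one may also simply quote \cite{Nazarov94}.
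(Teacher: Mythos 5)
The paper does not prove this lemma at all: it is quoted as an external result (``The Turan Lemma in the version of Nazarov'') with the reference \cite{Nazarov94}, so your closing remark --- that one may simply quote \cite{Nazarov94} --- is exactly the paper's approach and is all that is required here. Within your sketch, the one step that genuinely needs, and receives, an argument beyond the citation is the passage from the middle term to the right-hand side of \eqref{19}: you correctly note that $16\mathrm{e}/\pi<14$, and, more importantly, that the chain cannot be read literally as transitive, since for continuous $p$ the supremum over $E$ may strictly exceed $\esssup_E\vert p\vert=\|\chi_Ep\|_{L^\infty(\TT)}$ (take $E$ containing an isolated point where $\vert p\vert$ peaks). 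Your remedy --- delete the null set where $\vert p\vert$ exceeds its essential supremum on $E$ and apply the first inequality to the remaining subset of full measure --- is the right one.

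The remainder of the sketch should not be mistaken for a proof. The induction on the number of frequencies via finite differences is a recognizable strategy for the arc case, but the inductive step as described loses control of the coefficients: the operator $p(\cdot+\tau)-p(\cdot)$ multiplies $x_k$ by $e^{2\pi i m_k\tau}-1$, which for any fixed $\tau$ can be arbitrarily small (or zero) for individual $k$ when the degrees $m_k$ are unbounded, so recovering $\sum_k\vert x_k\vert$ from the induction hypothesis requires the averaging/optimization over $\tau$ that you only allude to; this is where the constant actually comes from. More seriously, the transference from arcs to arbitrary measurable sets is the real content of Nazarov's theorem, and it is not delivered by ``a multiscale covering argument that repeatedly invokes the arc estimate'': iterating the arc estimate across scales compounds the constants at every step and does not produce a bound of the form $(c/\vert E\vert)^{n-1}$ with an absolute $c$. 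Nazarov's argument rests on a different mechanism (distributional, Remez-type estimates for exponential polynomials obtained via zero-counting and complex-analytic tools), which your plan does not supply. Since the paper itself only cites the result, none of this affects the paper; but as a self-contained proof your proposal has a genuine gap at precisely the step you yourself flag as the hard part, so the honest course is to keep the citation and, at most, the elementary reduction of the second inequality to the first.
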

This is a typical uncertainty principle from harmonic analysis.
 Set $A_E=\widetilde{A}_E \circ  L\colon\ell^1(\ZZ)\rightarrow L^\infty(\TT)$ and rewrite (\ref{19}) for any $x \in \ell^1(\ZZ)$
with $|\supp x|\leq n$ as
 \begin{equation}\label{20}
 \|  x\|_{\ell^1(\ZZ)} \leq \left(\frac{14}{\vert E\vert}\right)^{n-1} \|  \widetilde{A}_E (p_x)\|_{L^\infty(\TT )}=\left(\frac{14}{\vert E\vert}\right)^{n-1}
 \|  A_Ex\|_{L^\infty(\TT )}.
 \end{equation}
Thus $A_E$ satisfies the uniform uncertainty principle (and thus in particular the restricted injectivity  property)
\begin{equation}\label{7}
\frac{1}{\gamma_n } \|x\|_{\ell^1(\ZZ)} \leq \|A_E x\|_Y \leq \|  Lx\|_Y =\|  Lx\|_{L^\infty(\TT )} \leq \|Lx\|_X = \|x\|_{\ell^1(\ZZ)}
\end{equation}
for all $ x\in \ell^1$  with  $ |\supp x|\leq n $, where, for alle $n \in \NN$,
\[
\gamma_n =\left(\frac{14}{\vert E\vert}\right)^{n-1} .
\]
Now we choose an arbitrary function $g\colon \TT \to (0,1]$ such that $g \geq \chi_{E}$ where $E\subseteq \TT$ is as above.
Let $M=M_g\colon L^\infty(\TT)\to L^\infty(\TT)$ be the multiplication operator by $g$ and  $A=A_g =M_g \circ L$.
Note that the inverse of $M$ is given by the multiplication by the function $\frac{1}{g}$ which is well defined,
but can be arbitrarily unbounded, depending on the specific choice of $g$. The bounds \eqref{20} and \eqref{7} remain valid if $A_E$ is replaced by $A_g$,
with the same constants. Thus $A_g$ satisfies
a uniform uncertainty principle and in particular a restricted injectivity property.

\begin{remark}
The clou in the Turan lemma is that only the order and not the degree of the polynomial enters!
There exist multi-dimensional analogs of the Wiener Algebra and of the Turan-Nazarov Lemma.
\end{remark}

\end{example}

\section{Outlook}\label{sc:outlook}

Proposition 5.1 establishes a relation between source conditions as used in
\cite{BurFleHof13} and \cite{FleHeg14} and a low-dimensional or restricted injectivity property.
The latter property is part of the famous restricted isometry property \eqref{eq:uup}.
It is used, cf.~e.~g.~\cite{CanRomTao06}, to solve approximately specific inverse problems
for non-injective linear maps. The approximation error can be given explicitly as
a function of the constants $\gamma_n$ and $\zeta_n$, $n \in \NN$ appearing
in \eqref{eq:uup}.

In view of Proposition 5.1.  the following question imposes itself:
Can property (iii) in Assumption~\ref{as:source} be reformulated in a way similar to the restricted injectivity property, i.e.\ without adhering to the adjoint $A^\ast$?
Note that one could slightly generalise property (iii) by requiring only $\|(I-P_{\supp \xi})A^\ast\eta\|_{L^\infty(\TT )}\leq c_n$ for some $c_n\in[0,1)$ independent of $\xi$.
This questions and the application of uniform uncertainty principles to ill posed inverse problems
in general will be pursued in a sequel paper.

\newpage

\section*{Acknowledgement}
J.~Flemming, B.~Hofmann, and I.~Veseli\'c were supported by the German Research Foundation (DFG) under grants FL~832/1-1, HO~1454/8-2, and \linebreak VE 253/6-1 respectively.

%

\end{document}